\def\rbb{\mathbb{R}}
\title{Anomalous diffusion of distinguished particles in bead-spring networks}
\author{Scott A.~McKinley\thanks{Department of Mathematics, Duke University, Box 90320,
Durham, NC 27701 \texttt{mckinley@math.duke.edu}}}
\date{\today}
\begin{document}

\sloppy

\renewcommand{\theequation}{\arabic{section}.\arabic{equation}}

\def\open#1{\setbox0=\hbox{$#1$}
\baselineskip = 0pt \vbox{\hbox{\hspace*{0.4 \wd0}\tiny
$\circ$}\hbox{$#1$}} \baselineskip = 10pt\!}

\newcounter{assume}

\newtheorem{thm}{Theorem}[section]
\newtheorem{prop}[thm]{Proposition}
\newtheorem{lem}[thm]{Lemma}
\newtheorem{cor}[thm]{Corollary}

\newtheorem{defn}[thm]{Definition}
\newtheorem{notation}[thm]{Notation}
\newtheorem{example}[thm]{Example}
\newtheorem{conj}[thm]{Conjecture}
\newtheorem{prob}[thm]{Problem}
\newtheorem{assumption}[assume]{Assumption}

\newtheorem{remark}[thm]{Remark}

\newcounter{other}
\newcounter{prove}

\newtheorem{theorem}{Theorem}[section]

\def\ep{\epsilon}
\def\ups{\Upsilon}
\def\ddt{\frac{d}{dt}}
\def\drag{\beta}
\def\viscosity{\eta}
\def\f{\varphi}
\def\sech{\mbox{sech}}
\def\diffeff{\sigma}
\def\lengtheff{\mathcal{L}}
\def\rel{\ep}
\def\acf{\sigma}
\def\msd{\sigma}
\def\sou{$\Sigma \mbox{OU }$}
\def\pbb{\mathbb{P}}

\def\schwartz{\mathscr{S}}

\def\Av{\mathbf{A}}
\def\Bv{\mathbf{B}}
\def\Dv{\mathbf{D}}
\def\Fv{\mathbf{F}}
\def\Pv{\mathbf{P}}
\def\Xv{\mathbf{X}}
\def\Wv{\mathbf{W}}
\def\Zv{\mathbf{Z}}

\def\Abf{\mathbf{A}}
\def\Bbf{\mathbf{B}}
\def\Dbf{\mathbf{D}}
\def\Fbf{\mathbf{F}}
\def\Ibf{\mathbf{I}}
\def\Lbf{\mathbf{L}}
\def\Pbf{\mathbf{P}}
\def\Qbf{\mathbf{Q}}
\def\Xbf{\mathbf{X}}
\def\Wbf{\mathbf{W}}
\def\Zbf{\mathbf{Z}}
\def\rbf{\mathbf{r}}
\def\vbf{\mathbf{v}}
\def\xbf{\mathbf{x}}
\def\zbf{\mathbf{z}}
\def\zerobf{\mathbf{0}}

\def\ebb{\mathbb{E}}
\def\nbb{\mathbb{N}}
\def\zbb{\mathbb{Z}}

\def\lambdabf{\boldsymbol{\lambda}}
\def\Lambdabf{\boldsymbol{\Lambda}}

\def\ecal{\mathcal{E}}
\def\gcal{\mathcal{G}}
\def\kcal{\mathcal{K}}
\def\lcal{\mathcal{L}}
\def\ucal{\mathcal{U}}

\def\lscr{\mathscr{L}}

\def\partialt{\frac{\partial}{\partial t}}
\def\partialx{\frac{\partial}{\partial x}}
\def\partialxx{\frac{\partial^2}{\partial x^2}}

\newcommand{\E}[1]{\mathbb{E}{\left[ #1\right]}}
\newcommand{\Ec}[1]{\mathbb{E}^c{\left[ #1\right]}}
\newcommand{\Ex}[2]{\mathbb{E}_{#1}{\left[ #2\right]}}
\newcommand{\p}[1]{\mathbb{P}{\left\{ #1\right\}}}
\newcommand{\Var}[1]{\mathrm{Var}{\left( #1\right)}}
\newcommand{\Varc}[1]{\mathrm{Var}^c{\left( #1\right)}}
\maketitle

\begin{abstract}
We consider the anomalous sub-diffusion of a class of Gaussian processes
that can be expressed in terms of sums of Ornstein-Uhlenbeck
processes. As a generic class of processes, we introduce a single parameter such that for any $\nu \in (0,1)$ the process can be tuned to produce a mean-squared displacement with $\E{x^2(t)} \sim t^\nu$ for large $t$.

The motivation for the specific structure of these sums of OU processes comes from the Rouse chain model from polymer kinetic theory. We generalize the model by studying the general dynamics of individual particles in networks of thermally fluctuating beads connected by Hookean springs. Such a set-up is similar to the study of Kac-Zwanzig heat bath models.  Whereas the existing heat bath literature places its assumptions on the spectrum of the Laplacian matrix associated to the spring connection graph, we study explicit graph structures. In this setting we prove a notion of universality for the Rouse chain's well-known $\E{x^2(t)} \sim t^\frac{1}{2}$ scaling behavior. Subsequently we demonstrate the existence of other anomalous behavior by changing the dimension of
the connection graph or by allowing repulsive forces among the beads.
\end{abstract}

\section{Introduction}

Due to recent and compelling experimental observations using
advanced microscopy \cite{2005-suh,2006-VirtualLung-PNAS,2007-hanes}
there is theoretical interest
\cite{2002-morgado-anom-diff,2004-kupferman-frac-kin,2008-vainstein-scaling-law,2008-santamariaholek,2008-kou-subdiffusion,2009-fricks,2009-jor}
in anomalous diffusion -- stochastic processes whose long-term
mean-squared displacement (MSD) satisfies $\E{x^2(t)} \sim t^\nu$
where $\nu \neq 1$. In each of the cited references, the observed
behavior is sub-diffusive, where $\nu \in (0,1)$. The canonical
example of a sub-diffusive process is fractional Brownian motion
\cite{2008-kou-subdiffusion}, but in this paper, we focus on a model
from polymer kinetic theory and natural generalizations.

The Rouse chain model of a polymer is a series of thermally
fluctuating beads $\{x_n(t)\}_{t \geq 0}$, $n \in \{1, \ldots, N\}$
that interact with nearest neighbors through linear spring forces.
It is a standard observation in the physics literature
\cite{book-doi-edwards,1990-kremer-grest,book-rubinstein-colby} that
while the center-of-mass of the chain is a diffusive process, there
exist positive times $\tau_1$ and $\tau_N$ such that individual
beads roughly exhibit the following MSD profile:
\begin{equation} \label{eq:sou-msd-profile}
\E{x_n^2(t)} \sim \left\{\begin{array}{cl}
t,& t \ll \tau_1\\
t^{\frac{1}{2}},& \tau_1 \ll t \ll \tau_N \\
t,& t \gg \tau_N \\
\end{array}\right.
\end{equation}
The times $\tau_1$ and $\tau_N$ are called the first and last
relaxation times respectively.  Two of the primary projects in the
present paper are to give precise mathematical meaning to a profile
such as \eqref{eq:sou-msd-profile} and to show that the anomalous
exponent $\nu = \frac12$ in the intermediate timescale is determined
by the geometric structure of the graph of connections among the
beads.

The observation that the dynamics of an individual particle in a
network, a so-called \emph{distinguished particle process}, can
exhibit anomalous diffusion is not restricted to polymer kinetics.
In a series of papers
\cite{2002-kupferman-stuart,2004-kupferman-frac-kin,2004-kupferman-fitting-sde}
the authors studied the behavior of a distinguished particle in a
Kac-Zwanzig heat bath, a model used in molecular dynamics theory to
study the force exerted on a particle by a randomly fluctuating
environment. Of present interest are the articles
\cite{2004-kupferman-frac-kin} and \cite{2008-kou-subdiffusion}
wherein the authors showed that in an appropriately constructed
large-$N$ limit, a family of distinguished particle processes can
converge weakly to a sub-diffusive limiting process. In
\cite{2008-kou-subdiffusion}, this process is fractional Brownian
motion, while in \cite{2004-kupferman-frac-kin} the limiting process
is the so-called generalized Langevin equation (see also
\cite{2001-zwanzig-book}) with a power law memory kernel. It is
worth noting that in each of the above cases, the results followed
from assumptions that were placed on the spectrum of the weighted
spring connection graph, rather than directly on its weights and
geometric structure, which is the goal of the present paper.

In \cite{2009-jor}, the authors introduce a common mathematical
framework to address the sub-diffusion seen in these models: a class
of Gaussian processes expressible in terms of a Brownian motion plus
a sum of Ornstein-Uhlenbeck processes,
\begin{equation} \label{eq:sou-defn-x}
x(t) = c_0 B_0(t) + \sum_{k=1}^{N-1} c_k z_k(t)
\end{equation}
where each $z_k$ satisfies the SDE
\begin{equation}
\label{eq:sou-defn-z} d z_k(t) = - \lambda_k z_k(t) + d B_k(t)
\end{equation}
The collection of standard Brownian motions $\{B_{k,N}\}_{k \leq
N-1}$ are assumed to be independent. The positive constants
$\{\lambda_{k}\}_{k \leq N-1}$ will be called the \emph{diffusive
spectrum} and the constants $\{c_{k}\}_{k \leq N-1}$ will be called
the \emph{coefficient family}. The inverses of the elements of the
diffusive spectrum $\tau_k := \lambda_k^{-1}$ are called the
\emph{relaxation times} of the process.  Henceforth we will refer to
processes defined by \eqref{eq:sou-defn-x} and \eqref{eq:sou-defn-z}
as \sou processes.

One can think of an \sou process as a diffusing particle that is
free to explore all of space, but is subject to a sequence of linear
mechanical responses from its environment. The central notion of this paper is that
anomalous diffusion can arise from the structure of timing of this
cascade of responses. One can directly show (see Section
\ref{subsec:distinguished-particles-defn}) that when the beads in a
network interact through linear spring forces, the associated
distinguished particle process is exactly expressible in terms of
equations \eqref{eq:sou-defn-x} and \eqref{eq:sou-defn-z}.  

In \cite{2009-jor}, the authors laid out the general relationship
between the diffusive spectrum $\{\lambda_{k}\}_{k=1}^{N-1}$ and the
intermediate timescale anomalous exponent for \sou processes.
Generalizing the Rouse spectrum  $\lambda_k = \sin^2(k \pi / 2N)$ from the polymer kinetic theory \cite{book-doi-edwards,book-rubinstein-colby} by defining the
diffusive spectrum to be
\begin{equation} \label{eq:zimm-spectrum}
\lambda_{k,N} = \left(\frac{k}{N}\right)^\rho \tau_1^{-1},
\end{equation}
we find that \sou processes can exhibit any desired
anomalous exponent between 0 and 1. Indeed, the full MSD profile of
an \sou process with generalized Rouse spectrum is given by 
\begin{equation} 
\E{x^2(t)} \sim \left\{\begin{array}{cl}
t,& t \ll \tau_1\\
t^{1 - \frac{1}{\rho}},& \tau_0 \ll t \ll \tau_N \\
t,& t \gg \tau_N \\
\end{array}\right.
\end{equation}
for any $\rho > 1$. The actual exponents observed in experiments
vary widely, and so the existence of stochastic processes that
exhibit robust and varied anomalous behavior is appealing to
experimentalists and engineers.  At present, this community still
lacks effective statistical inference tools for these sub-diffusive
processes as well as the ability to conduct simulated experiments
that are not computationally prohibitive.

\subsection{Summary of Results}

In this paper, we seek to give a rigorous interpretation to the
claims made in the companion work \cite{2009-jor}. In Theorem
\ref{thm:main} we give precise meaning to the MSD profile
\eqref{eq:sou-msd-profile}. The key observation, independently noted
in \cite{2008-kou-subdiffusion}, is that for these models, the
largest relaxation time $\tau_N$ tends to infinity with the number
of modes $N$. The family of processes is tight, with a sequence of
autocorrelation (ACF) functions that converge uniformly on compact sets. Therefore,
demonstrating anomalous diffusion for large $t$ in the limiting
process is tantamount to proving it for the intermediate timescale
of the finite $N$ processes.

There is a robust sense in which the anomalous exponent is
determined by the diffusive spectrum rather than the coefficient
family. This fact is vital if one hopes to perform statistical
inference on an \sou process, see \cite{2009-jor} for further
discussion. In Proposition \ref{prop:coeff-subdominant} we show that
setting the coefficients to be i.i.d.~random variables with mild
restrictions, the exponent $\nu$ remains the same.

While the generic \sou structure can support all anomalous
sub-diffusive exponents it turns out that the behavior of
distinguished particle processes is not so varied. It is conjectured
in the physics literature \cite{book-doi-edwards} that the Rouse
model is a universality class that captures the qualitative behavior
of a wide variety of bead-spring networks. The main result of Section \ref{sec:distinguished-particles} is that this is indeed true in some rigorous sense (Theorem \ref{thm:rouse-universal}).  The only exponent seen for a wide class
of models is $\nu = \frac{1}{2}$. However, it is possible to
construct weighted networks that produce different behavior by 1)
changing the dimension of the underlying spring connection graph,
Section \ref{subsec:rouse-higher-d}; and 2) by allowing for
repulsive forces among the beads, Section \ref{subsec:repulsive}.
Ultimately, such modifications cannot account for the wide behavior
seen experimentally.  One will likely need to account for some
combination of hydrodynamic self-interaction \cite{1956-zimm} \cite{1989-ottinger-rabin} and excluded volume effects \cite{book-doi-edwards}, but
rigorous study of these effects without pre-averaging approximations
remains an unsolved problem.

\subsection{Distinguished particles in bead-spring networks}
\label{subsec:distinguished-particles-defn}

We demonstrate the connection between distinguished particle
processes and \sou processes.  Let $\xbf(t) = (x_1(t), x_1(t),
\ldots, x_2(t), \ldots, x_N(t))$ denote the locations in $\rbb^d$ of
a set of particles at time $t \geq 0$. For the sake of simplicity we
take $d=1$, although this is not essential (see Section
\ref{subsec:high-dim}). Following the development of flexible
polymer kinetics \cite{book-doi-edwards,book-rubinstein-colby}, the
particles are subject to random thermal fluctuations while
interacting through a given quadratic configuration potential
$$
\Psi(\xbf) = \frac{1}{2} \sum_{n \neq m} \kappa_{nm} |x_n - x_m|^2.
$$
The set of pairs $\ecal := \{(x_n, x_m) : \kappa_{nm} > 0\}$
constitutes the set of edges of the graph $\gcal$ associated with
network.

Particle dynamics are formally set by a balance of forces through
the Langevin equation,
$$
m \ddot \xbf = - \eta \dot \xbf(t) - \nabla \Psi(\xbf(t)) + \sigma
\dot \Wbf(t)
$$
where $\eta$ is the viscosity of the fluid in which the particles
are immersed, and the strength of the noise $\sigma$ is related to
the viscosity through the fluctuation dissipation theorem: $\sigma
= \sqrt{2 k_B T \eta}$.  The constant $T$ is the temperature of the
fluid and $k_B$ is Boltzmann's constant. For simplicity we will
renormalize the dynamics so that $\eta = 1$.

The common mass $m$ of the particles is considered to be small. In
the companion paper \cite{2009-jor}, we observe that the zero-mass
limit is singular and non-trivial to analyze.  Here we
restrict our attention to the weak (overdamped) zero-mass limit, as described in
\cite{2009-jor}, which amounts to setting $m=0$ and heretofore
taking the stochastic forcing term $\Wbf(t)$ to be an i.i.d vector
of standard Brownian motions.

The force exerted by the configuration potential on the $n$-th bead
is
$$
-\nabla_{x_n}\!\Psi(\xbf) = \sum_{m \ne n} \kappa_{nm} (x_m - x_m)
$$
leading to the linear system of SDEs
\begin{equation} \label{eq:xbf-sde}
d \xbf(t) = \Lbf \xbf(t) dt + \sigma d\Wbf(t)
\end{equation}
where $\Lbf$ is the so-called Laplacian matrix for the spring
connection graph $\gcal$. The Laplacian matrix is sometimes written
$\Lbf = \Abf - \Dbf$ where $\Abf$ is the weighted adjacency matrix
for $\gcal$ and $\Dbf$ is a diagonal matrix whose entries are the
sums of spring constants $D_{nn} = \sum_{m} \kappa_{nm}$.

We note that $\Lbf$ is symmetric and negative definite. As such it
can be diagonalized in the form
\begin{equation} \label{eq:L-diag}
\Lbf = \Qbf \Lambdabf \Qbf^{-1}
\end{equation}
where $\Lambdabf$ is a diagonal matrix with the eigenvalues of
$\Lbf$ as its entries, and $\Qbf$ is an orthogonal matrix with the
eigenvectors of $\Lbf$ as its columns. We make use of a few standard observations. First, 0 is always an
eigenvalue of $\Lbf$ and if $\gcal$ is connected, then 0 has
multiplicity 1.  The eigenvector associated to 0 has the form $(1,
1, \ldots, 1)'$. Second, all non-zero eigenvalues of $\Lbf$ are
strictly negative.  These will be denoted $\{-\lambda_k\}$ with $k = 1, \ldots N-1$.

One may work with the system \eqref{eq:xbf-sde} by taking a discrete
Fourier transform, however we will use the eigendecomposition
\eqref{eq:L-diag} to define the so-called \emph{normal modes}: $\zbf
:= \Qbf^{-1} \xbf$. We readily see that these modes satisfy a
non-interacting system of SDEs,
\begin{equation} \label{eq:zbf-sde}
d \zbf(t) = \Lambdabf \zbf(t) dt + \sigma d \Bbf(t)
\end{equation}
where $\Bbf = \Qbf^{-1} \Wbf$.  Since $\Qbf$ is a orthogonal, the
rows of $\Qbf^{-1}$ form an orthonormal family of vectors and it
follows that $\Bbf$ is a vector of independent standard Brownian
motions.

We observe that the mode $z_0$, associated to the eigenvalue 0, is
simply a standard Brownian motion. Recalling that the form of the
eigenvector associated with the eigenvalue 0 is $(1,1,1, \ldots,
1)'$, it immediately follows that the ``center of mass'' of the
bead-spring network $\bar x(t) := \frac{1}{N} \sum_{n=1}^{N} x_n(t)
= \frac{1}{N} z_0(t)$ is also a standard Brownian motion with
diffusion coefficient $\sigma / \sqrt{N}$.

However, we will see that individual particles are sub-diffusive processes with
an exponent that depends on the details of the network.  We
transform back into real coordinates by multiplying
\eqref{eq:zbf-sde} on the left by $\Qbf$.  This recovers the \sou
process form of each particle:
\begin{equation} \label{eq:distinguished-sum-of-ou-x}
x_n(t) = \frac{\sigma}{\sqrt{N}} B_0(t) + \sum_{k=1}^{N-1}
q_{k+1,n+1} z_k(t)
\end{equation}
where the coefficients $\{q_{kn}\}_{k,n \leq N}$ are the entries of
$\Qbf$ and the $\{z_k\}_{k\leq N-1}$ are defined by the system of
SDEs
\begin{equation} \label{eq:distinguished-sum-of-ou-z}
d z_k(t) = - \lambda_k z_k(t) dt +
\sigma d B_k(t).
\end{equation}

\subsection{Touchstone example: the Rouse chain}
\label{subsec:rouse-intro}

We may now discuss the Rouse chain model in the context of \sou
processes.  The graph $\gcal_R$ associated with this model consists
of edges $x_n \leftrightarrow x_{n+1}$ for all $n = 1, \ldots, N,$
each with spring constant $\kappa$. We also include the edge $x_1
\leftrightarrow x_N$ so that the particles in the system are
exchangeable. This yields the system of SDEs
\begin{equation} \label{eq:rouse-interaction}
d x_n(t) = \kappa [x_{n-1}(t) - x_n(t)] + \kappa [x_{n+1}(t) -
x_n(t)] dt + \sigma dW_n(t),
\end{equation}
which can be summarized by the vector equation
$$
\dot \xbf (t) = \kappa \Lbf \xbf(t) + \sigma d \Wbf(t)
$$
where $\Lbf$ is the tridiagonal matrix
$$
\Lbf = \left(\begin{array}{rrrrrcrrr}
-2 & 1 & 0 & \phantom{-}0 & \phantom{-}0 & \phantom{-}\ldots & \phantom{-}0 & 0 & 1\\
1 & -2 & 1 & 0 & 0 & \phantom{-}\ldots & 0 & 0 & 0 \\
0 & 1 & -2 & 1 & 0 & \phantom{-}\ldots & 0 & 0 & 0 \\
&&&&& \phantom{-}\vdots &&&\\
0 & 0 & 0 & 0 & 0 & \phantom{-}\ldots & 1 & -2 & 1 \\
1 & 0 & 0 & 0 & 0 & \phantom{-}\ldots & 0 & 1 & -2 \\
\end{array} \right)
$$
The eigenvalues of the matrix $\Lbf$ are given by
$$
- \lambda_{k} = - 4 \sin^2\!\left(\frac{k \pi}{N}\right).
$$
for $k = 0, \ldots, N-1$ (see Section \ref{subsec:rouse-universality}). The fact that the eigenvalues are selected from what we shall call a \emph{spectral shape function}
$$
\f(x) = 4\sin^2(\pi x)
$$
is an essential feature to all the models studied in this paper (see
Assumption \ref{as:shape} in Section \ref{sec:sums-of-ous}).

The well known observation
\cite{book-doi-edwards,1990-kremer-grest,book-rubinstein-colby} that
distinguished particles in Rouse chains are sub-diffusive with
exponent $\nu = \frac12$ follows from noting that for small values
of $x$, the shape function $\f(x)$ behaves essentially like $x^2$, a
notion that is generalized by Assumption \ref{as:rho}. Then one can
apply Laplace's method to the MSD to determine the asymptotic
behavior, see proof of Theorem \ref{thm:main}.

\section{Sums of Ornstein-Uhlenbeck processes}
\label{sec:sums-of-ous}

Define the family of processes
\begin{equation} \label{eq:x-defn}
x_N(t) = c_{0,N} B_{0,N} + \sum_{k=1}^{N-1} c_{k,N} z_{k,N}(t)
\end{equation}
where for each $(k,N)$ the Ornstein-Uhlenbeck processes
$\{z_{k,N}\}_{k=1}^{N-1}$ satisfy the SDEs
\begin{equation} \label{eq:z-defn}
d z_{k,N}(t) = - \lambda_{k,N} z_{k,N}(t) dt + d B_{k,N}(t).
\end{equation}
Dependence on $N$ will be suppressed wherever there is no chance of
ambiguity.

Our focus will be on systems, such as the Rouse model, for which the
diffusive spectrum can be analyzed asymptotically in $N$. The
eigenvalues of the Laplacian matrix associated to the Rouse
connection graph $\gcal_R$ converged to the continuous shape function $4
\sin^2(\pi x)$ and we generalize the notion as follows.
\begin{assumption}[Diffusive spectrum shape function]
\label{as:shape} There exists a nonnegative continuous function $\f
\in L_1([0,1])$, that is strictly postive for all $x \in (0,1)$ with
$\f(0) = 0$ such that
$$
\lim_{N \to \infty} \sup_{k \in \{1, \ldots, N-1\}} \left\{\left|
\lambda_{k,N} - \f\Big(\frac{k}{N}\Big) \right| \right\}= 0
$$
\end{assumption}
Continuity of $\f$ along with the specification of the value $\f(0) = 0$ assures that the longest relaxation time $\tau_N$, which is the inverse of the smallest spectral
value, tends to infinity with $N$. As is mentioned in the Discussion
section at the end of this paper, there are natural generalizations
such as subdiffusion in a quadratic potential where this will not be
the case.

We will see that the behavior of the shape function near zero
determines the most important qualitative dynamics of \sou processes.
\begin{assumption}[Spectral parameter $\rho$] \label{as:rho}
The shape function $\f$ has a Frobenius expansion \cite{book-bender-orszag}
$$
\f(x) \sim x^{\rho} \sum_{n=0}^\infty a_n x^{n} \qquad \mbox{ for small } x
$$
for some $\rho > 0$ in the sense that for each fixed $N$,
$$
\lim_{x \to 0} \, x^{-N} \left(\f(t) - x^\rho \sum_{n=0}^N a_n x^{n}\right) =
0.
$$
\end{assumption}

We will see (Proposition \ref{prop:coeff-subdominant}) that the
effect of perturbations to the coefficient family is subdominant to
the shape of the diffusive spectrum. In applications of interest
this happens due to averaging of the coefficients, which we may
characterize in terms of weak convergence of measures. Define for $x \in [0,1]$ the sequence of
\emph{coefficient measures},
\begin{equation} \label{eq:coefficient-measure}
\mu_N(dx) := \sum_{k=0}^{N-1} \delta\!\left(x - \frac{k}{N}\right)
c_{k,N}^2
\end{equation}
where $\delta(x)$ is the Dirac $\delta$-distribution.

\begin{assumption}[Convergence of coefficients]
\label{as:coeff} There exists a nonnegative finite Radon measure
$\mu$ on the interval $[0,1]$ such that $\mu_N \to \mu$ weakly.
\end{assumption}

\bigskip

At this point, we make a note about initial conditions.  The
anomalous behavior in the limiting process is actually the infinite
extension of the transient dynamics of the finite $N$ processes. In
order for the sequence of processes $\{x_N(t)\}$ to be tight, it
must be true that the sequence of initial conditions $\{x_N(0)\}$
must also be tight.  It is natural to choose $x_N(0) = 0$ for all
$N$, but we will further simplify by choosing vanishing initial
condition for each of the OU processes: $z_{k,N}(0) = 0$. We note
that in most relevant cases it is not appropriate to simply choose
each $z_{k,N}$ from its respective stationary distribution. With
such a choice, as $N \to \infty$ the sum of samples from stationary
distributions will not converge.

\subsection{Asymptotic behavior of \sou processes}
\label{subsec:sou-asymptotic}

We seek to relate the structure of the shape function near zero to
the asymptotic anomalous diffusive exponent of an \sou process.  As
mentioned in the Introduction, for any fixed, finite $N$, it is
expected that the MSD profile will have the form $1 / \nu / 1$ over
the three timescale regimes. Before stating a rigorous description
of the dynamics in Theorem \ref{thm:main}, we include some intuitive
discussion.

The short-timescale diffusive regime has both a mathematical and a
physical interpretation.  The mathematical intuition is that
Ornstein-Uhlenbeck processes are locally like Brownian motions.
Therefore for $t \leq \tau_1 := \lambda_{max}^{-1}$, the process
$x_N(t)$ is essentially a finite sum of Brownian motions.
Physically, in the context of distinguished particle dynamics, the
initial diffusive regime results from the fact that for a short
period the beads are able to diffuse independent of the constraints
from the network.

To explain the diffusive behavior on the largest timescale, we first
note that the sum $\sum_{k=1}^{N-1} c_k z_k$ converges to a
stationary distribution which is normal with mean zero and variance
$\sum_{k=1}^{N-1} c_k^2 / (2 \lambda_k)$. The timescale of the
approach to stationarity is dictated by the longest relaxation time
$\tau_N = \lambda_{min}^{-1}$.  For $t > \tau_N$, the process
$x_N(t)$ is a Brownian motion plus a stationary correction and so
the MSD must satisfy $\lim_{t \to \infty} \E{x^2_N(t)}/t = c_0^2$.

We cannot analyze the intermediate regime exactly, but suppose that
Assumption \ref{as:rho} holds for $\rho > 0$.  Then $\lambda_{min}$
will be roughly $(k/N)^\rho$, which means that the longest
relaxation time $\tau_N = N^\rho$ approaches infinity as $N$
increases. As a result, the anomalous stage of the diffusion is
increasingly prolonged, and is infinite in extent in the large $N$
limit. In Theorem \ref{thm:main} we show that this limit exists and
in the course of the proof demonstrate that the MSD of the $x_N(t)$
processes converge uniformly on compact sets.  By performing an
asymptotic analysis on the limiting process we discover the
anomalous exponent and from the uniform convergence of the MSDs, we
see that this is indeed the anomalous exponent seen in the
intermediate phase of the finite $N$ processes.

\bigskip

We are ready to state the main theorem for \sou processes. For the large-$t$ asymptotic statements, we say that $f(t) \sim t^\nu$ if $\lim_{t \to \infty} f(t) / t^\nu = C$ for some nonnegative constant $C$.

\begin{theorem} \label{thm:main}
Let the sets of processes $\{x_N(t)\}$ and $\{z_{k,N}(t)\}$ be
defined by \eqref{eq:x-defn} and \eqref{eq:z-defn}, respectively.
Take $z_{k,N}(0) = 0$ for all $k$ and $N$. Suppose that the
diffusive spectrum $\{\lambda_{k,N}\}$ converges to a shape function
$\f$ in the sense of Assumption \ref{as:shape}. Furthermore, suppose
the coefficients $\{c_{k,N}\}$ satisfy Assumption \ref{as:coeff}
with limiting weight measure $\mu$.

Then the family $\{x_N(t)\}$ converges in distribution as $N \to
\infty$ to a mean zero Gaussian process $x(t)$ defined by its
auto-correlation function
\begin{equation} \label{eq:acf}
\E{x(t) x(s)} = \int_0^1 \frac{e^{-\f(x)|t-s|}}{2 \f(x)} \!\left(1 -
e^{-2 \f(x) (t \wedge s)}\right) \mu(dx).
\end{equation}
If the shape function $\f$ furthermore satisfies Assumption
\ref{as:rho} with spectral parameter $\rho > 0$ and the limiting
weight measure $\mu$ is Lebesgue measure, then
asymptotically, the limiting MSD function $\msd(t) := \E{x^2(t)}$
satisfies
$$
\msd(t) \sim t, \qquad t \mbox{ \emph{near zero.}}
$$
and
\begin{equation} \label{eq:msd-large-t}
\msd(t) \sim \left\{ \begin{array}{cc} t^{1 - \frac{1}{\rho}} &
\rho >  1 \\
\ln t & \rho = 1 \\
1 & 0 < \rho \leq 1 \end{array} \right. \qquad t \mbox{
\emph{large.}}
\end{equation}
\end{theorem}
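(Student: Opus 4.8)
The statement splits into two essentially independent tasks: weak convergence of the process family $\{x_N\}$, and the pointwise small-$t$/large-$t$ asymptotics of the limiting MSD $\msd(t) = \E{x^2(t)}$. Since every $x_N$ is a centered Gaussian process with continuous paths, I would obtain weak convergence by establishing (i) convergence of all covariances and (ii) tightness. For (i), recall that the OU process $z_{k,N}$ started at $0$ has covariance $\E{z_{k,N}(t)z_{k,N}(s)} = g(\lambda_{k,N};t,s)$, where for $\lambda > 0$
$$g(\lambda;t,s) := \frac{e^{-\lambda|t-s|}}{2\lambda}\bigl(1 - e^{-2\lambda(t\wedge s)}\bigr),$$
so that independence of the modes gives $R_N(t,s) := \E{x_N(t)x_N(s)} = \sum_{k=0}^{N-1} c_{k,N}^2\, g(\lambda_{k,N};t,s)$, with the conventions $\lambda_{0,N}=0$ and $g(0;t,s) = t\wedge s$. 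The structural point is that $\lambda \mapsto g(\lambda;t,s)$ extends analytically to $\lambda \ge 0$ and is bounded by $t\wedge s$, so $x \mapsto g(\f(x);t,s)$ is bounded and continuous on $[0,1]$ (using that $\f$ is continuous with $\f(0)=0$). I would then argue in two steps. First, Assumption \ref{as:shape} gives $\sup_k|\lambda_{k,N}-\f(k/N)| \to 0$, and uniform continuity of $g$ on compacts together with the bounded total mass $\mu_N([0,1])$ lets me replace $g(\lambda_{k,N};t,s)$ by $g(\f(k/N);t,s)$ with vanishing error. Second, $\sum_{k=0}^{N-1} c_{k,N}^2\, g(\f(k/N);t,s) = \int_{[0,1]} g(\f(x);t,s)\,\mu_N(dx)$, and since the integrand is bounded and continuous, the weak convergence $\mu_N \to \mu$ of Assumption \ref{as:coeff} yields $R_N(t,s) \to \int_0^1 g(\f(x);t,s)\,\mu(dx)$, which is exactly \eqref{eq:acf}. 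Convergence of all covariances of centered Gaussian vectors gives convergence of the finite-dimensional distributions.

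For tightness I would bound the increment variance uniformly in $N$. A direct It\^o-isometry computation, using $\tfrac{1-e^{-u}}{u}\le 1$, shows $\E{(z_{k,N}(t)-z_{k,N}(s))^2} \le C|t-s|$ with $C$ independent of $\lambda_{k,N}$; summing against $c_{k,N}^2$ gives $\E{(x_N(t)-x_N(s))^2} \le C\,\mu_N([0,1])\,|t-s| \le C'|t-s|$. Because increments are Gaussian, the fourth moment is controlled by the square of the variance, so $\E{|x_N(t)-x_N(s)|^4} \le C''|t-s|^2$, and Kolmogorov--Chentsov (with $x_N(0)=0$) delivers tightness in $C([0,T])$ and a continuous limit. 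Combined with the finite-dimensional convergence, this gives $x_N \Rightarrow x$, the centered Gaussian process with covariance \eqref{eq:acf}.

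For the asymptotics, set $\mu$ equal to Lebesgue measure, so $\msd(t) = R(t,t) = \int_0^1 \frac{1-e^{-2\f(x)t}}{2\f(x)}\,dx$. The small-$t$ claim is immediate from dominated convergence: $\msd(t)/t = \int_0^1 \frac{1-e^{-2\f(x)t}}{2\f(x)t}\,dx \to 1$, since the integrand is bounded by $1$ and tends to $1$ pointwise. For large $t$ the dominant contribution comes from a shrinking neighborhood of the origin, where Assumption \ref{as:rho} gives $\f(x) = a_0 x^\rho(1+o(1))$ with $a_0 > 0$. The plan is the rescaling $x = t^{-1/\rho}y$, which yields
$$\frac{\msd(t)}{t^{1-1/\rho}} = \frac{1}{t}\int_0^{t^{1/\rho}} \frac{1-e^{-2\f(t^{-1/\rho}y)t}}{2\f(t^{-1/\rho}y)}\,dy,$$
whose integrand converges pointwise to $\frac{1-e^{-2a_0 y^\rho}}{2a_0 y^\rho}$ because $\f(t^{-1/\rho}y)\,t \to a_0 y^\rho$. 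For $\rho > 1$ this limit function is integrable on $[0,\infty)$ (bounded near $0$, decaying like $y^{-\rho}$), so dominated convergence gives $\msd(t) \sim C_\rho\, t^{1-1/\rho}$ with $C_\rho = \int_0^\infty \frac{1-e^{-2a_0 y^\rho}}{2a_0 y^\rho}\,dy$. The same rescaled integral diverges logarithmically at $\rho = 1$, giving $\msd(t) \sim (2a_0)^{-1}\ln t$; and for $0 < \rho < 1$ the original integrand is dominated near $0$ by the integrable $\frac{1}{2\f(x)}$, so $\msd(t) \to \int_0^1 \frac{dx}{2\f(x)} < \infty$, i.e. $\msd(t) \sim 1$.

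The main obstacle is making this large-$t$ step rigorous rather than formal. Concretely I must: (a) justify replacing $\f$ by its leading term $a_0 x^\rho$ uniformly on the window carrying the dominant contribution, by extracting from Assumption \ref{as:rho} the two-sided bounds $\tfrac12 a_0 x^\rho \le \f(x) \le 2 a_0 x^\rho$ on some $[0,\delta]$; (b) exhibit a single $t$-independent integrable dominating function for the rescaled integrand (the bound $\min(1,(a_0 y^\rho)^{-1})$ works on $y \le \delta t^{1/\rho}$, using the lower bound in (a) and $\tfrac{1-e^{-u}}{u}\le 1$); and (c) verify that the complementary bulk region $x \in [\delta,1]$, where $\f$ is bounded below, contributes only $O(1)$ and is hence negligible against $t^{1-1/\rho}$ for $\rho \ge 1$ while being exactly the surviving finite limit for $\rho < 1$. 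One further subtlety I would flag is that $\f$ may vanish at the right endpoint as well (as in the Rouse case $\f(x)=4\sin^2(\pi x)$, where Assumption \ref{as:shape} only forces positivity on the open interval); when this happens the identical local rescaling applies near $x=1$, and the global rate is the larger of the endpoint contributions, which coincide for the symmetric Rouse spectrum. Once the dominated-convergence estimates (a)--(c) are in place, everything else is routine.
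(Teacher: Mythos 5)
Your proposal is correct, and its first half coincides with the paper's proof: the paper likewise gets finite-dimensional convergence from pointwise convergence of the Gaussian covariances, rewriting $\acf_N(t,s)$ as $\int_0^1 \frac{e^{-\f(x)|t-s|}}{2\f(x)}\bigl(1-e^{-2\f(x)(t\wedge s)}\bigr)\mu_N(dx)$ and invoking weak convergence of $\mu_N$ against the bounded continuous integrand, and proves tightness with exactly your increment bound $\E{(x_N(t)-x_N(s))^2} \leq C\,\mu_N([0,1])\,|t-s|$ (via $1-e^{-u}\leq u$), Gaussian fourth moments, and the Kolmogorov criterion. Indeed your two-step replacement of $\lambda_{k,N}$ by $\f(k/N)$, using uniform continuity of $g$ and the uniformly bounded masses $\mu_N([0,1])$, is \emph{more} careful than the paper, which passes from the sum over $\lambda_{k,N}$ to the $\mu_N$-integral in one line. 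Where you genuinely diverge is the asymptotic step: the paper first applies the identity $\lambda^{-1}(1-e^{-\lambda t}) = \int_0^t e^{-\lambda s}\,ds$ and Fubini to write $\msd(t) = \int_0^t \Phi(s)\,ds$ with $\Phi(s) = \int_0^1 e^{-2\f(x)s}\,dx$, gets $\msd(t)\sim t$ near zero from continuity of $\Phi$ and the Fundamental Theorem of Calculus, and then cites Laplace's method to obtain $\Phi(s)\sim C s^{-1/\rho}$ before integrating over the three ranges of $\rho$; you instead work directly on the MSD integral with the rescaling $x = t^{-1/\rho}y$ and dominated convergence, supplying yourself the two-sided bounds on $\f$ near $0$, the $t$-independent dominating function, and the $O(1)$ bulk estimate that the paper delegates to the Bender--Orszag reference. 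The two routes are mathematically equivalent (your substitution is essentially Laplace's method carried out by hand, one $s$-integration earlier); the paper's factorization through $\Phi$ makes the three $\rho$-regimes transparent as integrals of $s^{-1/\rho}$, while your version is self-contained, produces the limiting constants explicitly, and -- a genuine bonus -- flags the possibility that $\f$ also vanishes at $x=1$ (as for the Rouse shape $4\sin^2(\pi x)$), a point the paper's proof of this theorem silently assumes away ("the minimum value of $\f(x)$ is assumed to be at $x=0$") and only handles later, in Section 3.2, by restricting to $[0,1/2]$ and doubling.
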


\begin{remark}
It is important to note that the limits with respect to $N$ and $t$
implicit in \eqref{eq:msd-large-t} are not interchangeable.  For any
finite $N$, $\E{x_N^2(t)} \sim t$ for large $t$ because the Brownian
term eventually dominates the dynamics.  In the absence of the
Brownian term ($c_{0,N} = 0$), the process $x_N(t)$ is positive recurrent.
\end{remark}

\begin{proof}
Convergence in distribution follows from establishing two standard
facts \cite{book-revuz-yor}: convergence of the finite-dimensional
distributions, and tightness in the space $C([0,T])$ of the family of processess $\{x_N\}$ for any $T > 0$. Since each of the processes
in this sequence is Gaussian, convergence of finite-dimensional
distributions follows from pointwise convergence of the
ACFs, $\acf_N(t,s) := \E{x_N(t)x_N(s)}$.
In order to establish tightness we will use the
Kolmogorov criterion \eqref{eq:tightness-condition}. Subsequently,
the asymptotic analysis reduces to an application of Laplace's
method \cite{book-bender-orszag}.

\smallskip

\noindent \emph{Convergence of the finite-dimensional
distributions}: We compute the ACF for $x_N$.  The explicit solution of the respective OU processes is given by
$$
z_k(t) = e^{-\alpha_k t} z_k(0) + \int_{0}^t e^{-\lambda_k (t-t')}
dW_k(t')
$$
where we have suppressed the dependence of the coefficients
$\{c_k\}$ and diffusive spectrum $\{\lambda_k\}$ on $N$. Since the
modes are assumed to be independent with vanishing initial
conditions, we see that for $s,t > 0$. 
$$
\E{z_k(t) z_j(s)} = \delta_{kj} \frac{1}{2 \lambda_{k}} e^{-
\lambda_k |t - s|} \left(1 - e^{- 2 \lambda_k (t \wedge s)}\right) .
$$
where $\delta_{kj}$ is the Kronecker delta-function. Observing that
cross-terms disappear and including the leading term
$\E{B_0(t) B_0(s)} = t\wedge s$, yields
\begin{equation} \label{eq:acf-N}
\acf_N(t,s) = c_0^2 (t \wedge s) + \sum_{k=1}^{N-1} \frac{c_k^2}{2
\lambda_{k}} e^{-\lambda_k |t - s|} \left(1 - e^{- 2 \lambda_k (t
\wedge s)}\right).
\end{equation}
In light of the assumption that $\f(0) = 0$, the above can be
rewritten in terms of the coefficient measures defined in Assumption \ref{as:coeff},
$$
\sigma_N(t,s) = \int_0^1 \frac{e^{-\f(x)|t-s|}}{2 \f(x)} \!\left(1 -
e^{-2 \f(x) (t \wedge s)}\right) \mu_N(dx).
$$
Note that the integrand is continuous for all $x \in (0,1]$ and can
be extended analytically to include $x=0$ for each choice of $t$ and $s$.
The integrand is bounded above by $t \wedge s$ and therefore the
weak convergence of the measures $\mu_N$ implies the limiting
expression \eqref{eq:acf}.

\smallskip

\noindent \emph{Tightness}: As mentioned, tightness of the family of
processes $\{x_N\}$ in $C([0,T])$ is implied by the Kolmogorov
criterion: given $T> 0$, there exists an $N_0 \in \nbb$ and strictly
positive constants $\alpha$, $\beta$ and $C$ such that
\begin{equation} \label{eq:tightness-condition}
\sup_{N \geq N_0} \E{|x_N(t) - x_N(s)|^\alpha} \leq C
|t-s|^{1+\beta}
\end{equation}
for all $s,t \in [0,T]$

From \eqref{eq:acf-N}, we compute
\begin{align*} \E{(z_k(t) - z_k(s))(z_j(t) - z_j(s))} &=
\delta_{jk} \frac{1}{2\lambda_k} (2 - e^{-2 \lambda_k (t \wedge s})
(1- e^{-2\lambda_k |t-s|}),
\end{align*}
while $\E{(B_0(t) - B_0(s))^2} = |t-s|$. Cross-terms vanish and we
find
\begin{align*}
\E{(x_N(t) - x_N(s))^2} &= c_0^2 |t-s| + \sum_{k=1}^{N-1}
\frac{c_k^2}{2 \lambda_k} (2 - e^{-2 \lambda_k (t \wedge s)}) (1 -
e^{-2\lambda_k|t-s|}) \\
&\leq c_0^2 |t-s| + \sum_{k=1}^{N-1} \frac{c_k^2}{
\lambda_k} (1 - e^{-2\lambda_k|t-s|}) \\
&\leq \left(2\sum_{k=0}^{N-1} c_k^2\right) |t-s|
\end{align*}
In the last line we applied the naive estimate $(1 - e^{\lambda t})
\leq \lambda t$ to each term of the sum.  The sum appearing in the
last line is exactly $\sum_{k=0}^{N-1} \delta(x -
\frac{k}{N})\mathbf{1}_{[0,1]}(x) c_k^2 = \mu_N([0,1])$. By Assumption \ref{as:coeff},
$\mu_N \to \mu$ weakly and by an equivalent statement we have
$$
\limsup_{N \to \infty} \mu_N([0,1]) \leq \mu([0,1]).
$$
As such, there exists an $N_0$ such that for all $N \geq N_0$,
$\mu_N([0,1]) \leq 1 + \mu([0,1])$.   Therefore, for all $N \geq
N_0$,
$$
\E{(x_N(t) - x_N(s))^2} \leq (1 + \mu([0,1])) |t-s|.
$$
Finally, noting that $x_N(t) - x_N(s)$ is Gaussian, we see that
\begin{equation} \label{eq:x-N-increment-fourth-moment}
\sup_{N \geq N_0} \E{(x_N(t) - x_N(s))^4} \leq 3 (1 + \mu([0,1]))^2
|t-s|^2,
\end{equation}
which confirms \eqref{eq:tightness-condition}.

\smallskip

\noindent \emph{Asymptotic analysis}: We now consider the large-$t$
asymptotic behavior of the limiting MSD function $\sigma(t) :=
\E{x^2(t)}$ in the presence of Assumption \ref{as:rho} with shape parameter $\rho$. First we observe that for a given constant $\lambda$
$$
\lambda^{-1} (1 - e^{-\lambda t}) = \int_0^t e^{-\lambda s} ds.
$$
Applying this identity to the integrand in \eqref{eq:acf} and
subsequently using Fubini's Theorem to interchange the integrals
yields
$$
\sigma(t) = \int_0^t \int_0^1 e^{-2 \f(x) s} \mu(dx) ds.
$$
Recalling our assumption that $\mu$ is simply Lebesgue measure, we
write
$$
\sigma(t) = \int_0^t \Phi(s) ds
$$
where $\Phi(s)$ is the Laplace integral $\Phi(s) := \int_0^1 e^{-2
\f(x) s} dx$.

Because $\f(x)$ is continuous (by Assumption \ref{as:shape}) and
therefore bounded, it follows that $\Phi(s)$ is also continuous. By
the Fundamental Theorem of Calculus,
$$
\lim_{t \to 0} \frac{1}{t} \int_0^t \Phi(s) ds = \Phi(0) = 1.
$$
This limit is finite and nonzero, which directly implies that
near zero $\msd(t) \sim t$.

In order to characterize large-$t$ behavior, we note that the minimum value of
$\f(x)$ is assumed to be at $x=0$ and therefore the only significant
contribution to the large $s$ asymptotics will be in a small neighborhood
near zero. Following \cite{book-bender-orszag}, for example, we have
$$
\int_0^1 e^{-\f(x) s} dx \sim \int_0^\infty e^{- x^\rho s} dx =
s^{-\frac{1}{\rho}} \frac{1}{\rho} \Gamma\big(\frac{1}{\rho}\big)
$$
where in the last equality we applied the substitution $y = x^\rho
s$ and $\Gamma$ is the Gamma function $\Gamma(z) := \int_0^\infty
y^{z-1} e^{-z} dz$. Integrating this asymptotic expression while
minding the various ranges of values of $\rho$ yields
\eqref{eq:msd-large-t}.
\end{proof}

\bigskip

By generalizing \eqref{eq:x-N-increment-fourth-moment} to higher and
higher moments, one can show that the limiting process $x(t)$ is
$\alpha$-H\"older continuous for any $\alpha \in (0,1/2)$.  This
reinforces the notion that the limiting process is locally like
Brownian motion, but asymptotically like fractional Brownian motion.

\subsubsection{Robustness of the anomalous exponent with respect to
perturbing the coefficients}

As noted in \cite{2009-jor}, if one were interested in conducting
statistical inference on a set of data, trying to fit to an \sou
process, it would be a prohibitive task to fit the tens of thousands
of coefficients.  And Assumption \ref{as:coeff} and the calculation
for the ACF of distinguished particles processes (see proof of
Theorem \ref{thm:distinguished}) can leave the false impression
that the ability to calculate the anomalous exponent $\nu$ is
restricted to special, delicately balanced coefficient families.  In
fact, the result is more robust than this.  We demonstrate this	
phenomenon with the following proposition which presumes random
coefficients.

In what follows there are two senses of averaging.  There is the probability space we use to select the coefficient family and for a fixed coefficient family, there is the probability space associated with the family of Brownian motions that drive the dynamics.  We will use $\mathbb{E}^c$ and $\mbox{Var}^c$ to denote the average and variance with respect to the coefficient probability space. and will continue to use $\ebb$ and $\mbox{Var}$ to denote averages and variances with respect to instances of the Brownian motions.	

\begin{prop} \label{prop:coeff-subdominant}
Suppose the triangular array of coefficients $\{c_{k,N}\}_{k\leq N}$
are independent random variables of the form $c_{k,N} = c /
\sqrt{N}$ where $c$ is a random variable with a finite fourth
moment.  Then the associated coefficient measures
$\{\mu_N\}_{N=1}^\infty$ converge weakly to $\E{c^2}$ times Lebesgue
measure almost surely.

Furthermore, the conclusions of Theorem \ref{thm:main} apply to the
limiting process, $x(t)$.
\end{prop}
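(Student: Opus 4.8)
The plan is to prove the two assertions in turn: first the almost sure weak convergence $\mu_N \to \E{c^2}\,\mathrm{Leb}$, and then the transfer of Theorem~\ref{thm:main}'s conclusions to the resulting limit. Writing $c_{k,N} = c_k/\sqrt N$ with $\{c_k\}$ i.i.d.\ copies of $c$, the coefficient measure \eqref{eq:coefficient-measure} places mass $c_k^2/N$ at the node $k/N$, so for $g \in C([0,1])$ we must control the random Riemann-type sums $S_N(g) := \int_0^1 g\,d\mu_N = \frac1N\sum_{k=0}^{N-1} g(k/N)\,c_k^2$. Since $[0,1]$ is compact, weak convergence is determined by a countable family of test functions; I would use the monomials $g_m(x) = x^m$, $m\ge 0$, because a finite measure on a bounded interval is determined by its moments (Weierstrass), and convergence of every moment together with convergence of total mass forces weak convergence. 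Thus it suffices to prove $S_N(g_m) \to \E{c^2}/(m+1)$ almost surely for each fixed $m$ and then intersect the countably many probability-one events.

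For fixed $g = g_m$, I would split $S_N(g) = A_N + F_N$ into its coefficient-space mean $A_N := \Ec{S_N(g)} = \E{c^2}\cdot\frac1N\sum_{k=0}^{N-1} g(k/N)$ and the mean-zero fluctuation $F_N := \frac1N\sum_{k=0}^{N-1} g(k/N)\big(c_k^2 - \E{c^2}\big)$. The deterministic part is a Riemann sum, so $A_N \to \E{c^2}\int_0^1 g(x)\,dx = \E{c^2}/(m+1)$. For the fluctuation, independence and the finite fourth moment of $c$ (which is exactly what guarantees $\Varc{c^2} < \infty$) give $\Varc{F_N} = \frac1{N^2}\sum_{k=0}^{N-1} g(k/N)^2\,\Varc{c^2} \le \Varc{c^2}/N \to 0$, so $F_N \to 0$ in the $L^2$ sense of the coefficient space.

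The main obstacle is upgrading this $L^2$ (hence in-probability) statement to almost sure convergence, since the finite fourth moment controls only $\Varc{c^2}$ and not the eighth moment that a direct fourth-moment Borel--Cantelli bound on $F_N$ would require. I would resolve this by Abel summation against the classical strong law. Setting $D_k := c_k^2 - \E{c^2}$ and $T_n := \sum_{k=0}^{n-1} D_k$, the ordinary SLLN gives $T_n/n \to 0$ almost surely; summation by parts then yields
$$
F_N = \frac1N\Big[\,g\big(\tfrac{N-1}{N}\big)\,T_N \;-\; \sum_{k=1}^{N-1}\big(g(\tfrac{k}{N}) - g(\tfrac{k-1}{N})\big)\,T_k\,\Big].
$$
The first term is $\tfrac1N T_N$ times a bounded factor and vanishes; since $g_m$ is Lipschitz on $[0,1]$ its increments are $O(1/N)$, so the second term is dominated by $\tfrac{L}{N^2}\sum_{k=1}^{N-1}|T_k|$, which tends to $0$ almost surely by a Ces\`aro argument using $|T_k|/k \to 0$. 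Hence $F_N \to 0$ a.s., completing the first assertion. (Alternatively one may pass to the subsequence $N = j^2$, where $\Varc{F_{j^2}} \le \Varc{c^2}/j^2$ is summable, apply Chebyshev and Borel--Cantelli, and then fill the gaps between consecutive squares; the Abel route seems cleaner here.)

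For the second assertion I would note that on the full-measure event where the weak convergence holds, Assumption~\ref{as:coeff} is satisfied with limiting measure $\mu = \E{c^2}\,\mathrm{Leb}$, and moreover $\mu_N([0,1]) = \frac1N\sum_k c_k^2 \to \E{c^2}$ by the SLLN, so the tightness estimate in the proof of Theorem~\ref{thm:main} is uniform along this realization. Hence the ACF converges to \eqref{eq:acf} and $x_N \Rightarrow x$. The MSD conclusion of Theorem~\ref{thm:main} is stated for $\mu = \mathrm{Leb}$, whereas here $\mu$ is the constant multiple $\E{c^2}\,\mathrm{Leb}$; but this merely rescales the Laplace integral, replacing $\sigma(t) = \int_0^t \Phi(s)\,ds$ by $\E{c^2}\int_0^t\Phi(s)\,ds$, which changes only the nonnegative prefactor $C$ in $\sigma(t) \sim t^\nu$ and leaves the exponent intact (assuming $\E{c^2} > 0$, else $c\equiv 0$ and the process is trivial). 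Finally, since $c_{0,N}^2 = c_0^2/N \to 0$, the Brownian mode drops out in the limit and $\mu$ carries no atom at the origin, consistent with $\mu = \E{c^2}\,\mathrm{Leb}$.
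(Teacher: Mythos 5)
Your proof is correct, and at the decisive step it is genuinely different from --- in fact more complete than --- the paper's own argument. The paper opens exactly as you do: for a fixed continuous test function $f$ it computes the coefficient-space mean $\Ec{I_N} = m_2 \cdot \frac{1}{N}\sum_{k} f(k/N)$ (a Riemann sum) and the variance $\Varc{I_N} = (m_4 - m_2^2)\,\frac{1}{N}\bigl(\int_0^1 f^2(x)\,dx + O(1/N)\bigr)$, but it then concludes almost sure convergence of $I_N$ directly from $\Varc{I_N} \to 0$, with only a passing invocation of the Strong Law. As you correctly observed, a variance of order $1/N$ is not summable, so Chebyshev plus Borel--Cantelli does not apply directly, and the triangular weights $f(k/N)$ (which change with $N$) block a naive citation of the SLLN; the paper's computation as written yields only $L^2$, hence in-probability, convergence for each fixed $f$. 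Your Abel-summation repair --- expressing $F_N$ through the partial sums $T_k$ of $c_k^2 - \E{c^2}$, using $T_k/k \to 0$ a.s.\ from the classical SLLN, the Lipschitz bound on monomial increments, and a Ces\`aro estimate --- supplies precisely the missing upgrade to almost sure convergence (your quadratic-subsequence alternative would also work, though, as you note, less cleanly since the node positions $k/N$ move with $N$). A small bonus of your route is that the a.s.\ convergence itself needs only $\E{c^2} < \infty$; the fourth moment enters only in the $L^2$ bookkeeping. You are also more careful on two points the paper glosses over: reducing to the countable family of monomials so that a single null set serves all test functions simultaneously (the paper fixes one $f$ at a time, which by itself does not give a.s.\ \emph{weak} convergence), and noting explicitly that since $\mu = \E{c^2}\,\mathrm{Leb}$ is a constant multiple of Lebesgue measure, the Laplace-method asymptotics of Theorem \ref{thm:main} change only in the nonnegative prefactor and not in the exponent, with the degenerate case $\E{c^2} = 0$ set aside. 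In short: same mean/variance decomposition as the paper, but your argument closes a genuine gap that the paper's terser proof leaves open.
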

\begin{proof}
The result follows from the Strong Law of Large Numbers.

Let $f:[0,1] \to \rbb$ be a continuous function on $(0,1)$ and
denote $m_n := \ebb^c[c^n]$, for $n = \{1, 2, 3, 4\}$. We also recall
the definition of the coefficient measures $\mu_N$ from Assumption
\ref{as:coeff}. Then the random variables
$$
I_N := \int_0^1 f(x) \mu_N(dx) = \sum_{k=1}^{N-1}
f\!\left(\frac{k}{N}\right) c_{k,N}^2
$$
have mean $\ebb^c[I_N] = m_2 \sum_{k=0}^{N-1}
f\!\left(\frac{k}{N}\right) \frac{1}{N}$ and variance
\begin{align*}
\Varc{I_N} &= \Ec{\left(\sum_{k=1}^{N-1} f\!\left(\frac{k}{N}\right)
\left(c_{k,N}^2 - \frac{m_2}{N}\right) \right)^2} \\
&= \sum_{k=1}^{N-1} f^2\!\left(\frac{k}{N}\right) \Ec{\left(c_{k,N}^2
- \frac{m_2}{N}\right)^2}  \\
&= \sum_{k=1}^{N-1} f^2\!\left(\frac{k}{N}\right) (m_4 - m_2^2
)\frac{1}{N^2}
\end{align*}
For the second equality, we note that cross-terms of the sum vanish
due to independence of the coefficients.  It remains to recognize
the Riemann approximation $\sum_{k=0}^{N-1} f^2(k/N) 1/N = \int_0^1
f^2(x) dx + O(1/N)$, which implies that
$$
\Varc{I_N} = (m_4 - m_2^2) \frac{1}{N} \left(\int_0^1 f^2(x) dx +
O(1/N)\right),
$$
which tends to 0 as $N \to \infty$.  Therefore the sequence of
random variables $I_N$ converges almost surely and we conclude that,
in the language of Assumption \ref{as:coeff}, the coefficient
measures $\mu_N$ converge to $m_2$ times Lebesgue measure almost
surely.
\end{proof}

\section{Anomalous diffusion of distinguished particles in bead-spring networks}
\label{sec:distinguished-particles}

\subsection{General framework and main theorem}

When considering the large $N$ scaling limit of bead-spring systems, there are two distinct constructions. In the polymer physics community \cite{book-doi-edwards} \cite{book-rubinstein-colby}, it is typical to simply add a new bead to the bead-spring loop while keeping the spring constants the same.  In mathematical developments, (see \cite{1983-funaki} for example), it is typical to also increase the spring forces while rescaling magnitude of the noise in order to develop a continuum limit of the full bead-spring system.  This is the so-called ``random string'' model.  

While the distinguished particle limit exists in both cases, there is a marked qualitative difference in the behavior of the limiting process.  In the physics development, the limiting process is of the type described in the preceding section: locally like a Brownian motion, but globally sub-diffusive.  In contrast, the limiting distinguished particle process in the ``random string'' development is sub-diffusive on the shortest time scales and approaches a stationary distribution.  As is further discussed in Section \ref{subsec:spde}, the effect of ``bringing the anomalous diffusion to the local scale'' is that the resulting process is locally rougher than Brownian motion, having infinite quadratic variation, but finite quartic variation \cite{2007-swanson}. Our focus is not on this development however, because we are interesting in processes which exhibit anomalous diffusion over arbitrarily large time scales.  One may be concerned that the full chain does not have a limit in the polymer physics construction, but such a limit is not our goal.  Rather, we proceed with the knowledge that there are a finite but large number of beads in the relevant physical systems and although there is no convergence of the full chain structure, there is \emph{convergence of the effect of the chain} on the distinguished particles.

\bigskip

Convergence of diffusive spectrum is sufficient for convergence in distribution of a family of \sou processes.  We now argue the same principle holds for
distinguished particle processes.  The central insight is that the diffusive spectrum in this setting is given by the eigenvalues of the Laplacian matrix associated with the weighted connection graph $\gcal$.  A well-established characterization of stability of a family of graphs is convergence of the eigenvalues to some shape function, as seen for the Rouse chain model in Section \ref{subsec:rouse-intro}.

The only technical detail is that the coefficient
structure of the distinguished particle processes defined in Section
\ref{subsec:distinguished-particles-defn} by Equations
\eqref{eq:distinguished-sum-of-ou-x} and
\eqref{eq:distinguished-sum-of-ou-z} may not precisely satisfy
Assumption \ref{as:coeff} on the coefficient family.  However, after
computing the ACF of the distinguished particle process, we see that
it is equivalent in law to an \emph{effective} \sou process to which
Theorem \ref{thm:main} does apply.

\begin{theorem} \label{thm:distinguished}
Let $\{\gcal_N\}_{N\in\nbb}$ be a sequence of graphs, each having
$N$ vertices respectively, with edge weight sets $\{\ecal_N\}_{N\in
\nbb}$ such that the triangular family $\{\lambda_{k,N}\}_{k \leq
N}$ of eigenvalues of the associated Laplacian matrices $\Lbf_N$
satisfy Assumption \ref{as:shape}. Furthermore, suppose that the
graphs are constructed in such a way that the individual particle
processes are exchangeable.

Then the conclusions of Theorem \ref{thm:main} hold for the family
of processes $\{x_{0,N}(t)\}_{N \in \nbb}$ defined by
\eqref{eq:x-defn} and \eqref{eq:z-defn}.
\end{theorem}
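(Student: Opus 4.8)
The plan is to reduce the statement to Theorem~\ref{thm:main} by computing the autocorrelation function of the distinguished particle process and then using exchangeability to replace its (potentially ill-behaved) coefficient family by an \emph{effective} one that satisfies Assumption~\ref{as:coeff} with a Lebesgue limit. Recall from \eqref{eq:distinguished-sum-of-ou-x} and \eqref{eq:distinguished-sum-of-ou-z} that $x_{0,N}$ is already of \sou form, but its coefficients are the eigenvector entries $q_{k+1,1}$, whose squares need not converge to any nice measure as $N \to \infty$. The point is that we never have to control these individual entries: only their particle-averaged squares enter, and those are pinned down exactly by the orthogonality of $\Qbf$.

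First I would write down the ACF. Since the normal modes $\{z_{k,N}\}$ are independent and started from $z_{k,N}(0)=0$, the cross-terms vanish exactly as in the derivation of \eqref{eq:acf-N}, giving for any particle index $n$
\begin{equation*}
\E{x_{n,N}(t)\,x_{n,N}(s)} = \frac{\sigma^2}{N}(t\wedge s) + \sum_{k=1}^{N-1} q_{k+1,n+1}^2\,\frac{\sigma^2}{2\lambda_{k,N}}\,e^{-\lambda_{k,N}|t-s|}\bigl(1 - e^{-2\lambda_{k,N}(t\wedge s)}\bigr).
\end{equation*}
By the exchangeability hypothesis this ACF is identical for every $n$, so it equals its own average over the $N$ particles. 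Averaging commutes with the sum over $k$, and the only $n$-dependent factor is $q_{k+1,n+1}^2$.

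Second, I would evaluate the averaged weights. For each fixed mode $k$, the quantity $\frac{1}{N}\sum_{n} q_{k+1,n+1}^2$ is $1/N$ times the squared Euclidean norm of a row of the orthogonal matrix $\Qbf$, hence exactly $1/N$. Consequently the particle-averaged ACF---which by exchangeability is precisely $\E{x_{0,N}(t)\,x_{0,N}(s)}$---coincides with the ACF \eqref{eq:acf-N} of an \sou process whose coefficient measure \eqref{eq:coefficient-measure} is the uniform measure $\mu_N(dx) = \frac{\sigma^2}{N}\sum_{k=0}^{N-1}\delta(x - k/N)$. This $\mu_N$ converges weakly to $\sigma^2$ times Lebesgue measure, so Assumption~\ref{as:coeff} holds for the effective process. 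Since two mean-zero Gaussian processes with identical autocorrelation functions have the same law, $x_{0,N}$ is equal in distribution to this effective \sou process; Assumption~\ref{as:shape} is assumed on the spectrum, the shape function $\f$ (hence Assumption~\ref{as:rho}, if present) is inherited unchanged, and the constant factor $\sigma^2$ merely rescales the MSD without altering the anomalous exponent in \eqref{eq:msd-large-t}. Theorem~\ref{thm:main} therefore applies and yields every conclusion.

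The one step requiring genuine care is the passage from exchangeability to the particle average. I expect the main obstacle to be stating precisely what ``exchangeable particle processes'' means---equality in law of the joint process under permutations of the vertex labels, and hence equality of the individual single-particle ACFs---and confirming that this is exactly what the graph-construction hypothesis supplies. Everything else is bookkeeping: the vanishing of cross-terms is immediate from independence, the averaged weight $1/N$ is forced by orthogonality of $\Qbf$ with no asymptotics required, and the weak convergence of $\mu_N$ to $\sigma^2$ times Lebesgue measure is the same Riemann-sum limit already used in the uniform-coefficient case of Theorem~\ref{thm:main}.
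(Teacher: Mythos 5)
Your proposal is correct and follows essentially the same route as the paper: exchangeability reduces the single-particle ACF to the normalized trace $\frac{1}{N}\E{\xbf(t)\cdot\xbf(s)}$, which identifies $x_{0,N}$ in law with an effective \sou process with uniform coefficients $c_{k,N} = \sigma/\sqrt{N}$, whose coefficient measures converge to (a constant multiple of) Lebesgue measure so that Theorem \ref{thm:main} applies. The only difference is cosmetic: you evaluate the trace directly in the normal-mode basis via the unit norms of the rows of the orthogonal matrix $\Qbf$, whereas the paper computes the same quantity through Duhamel's formula and the Frobenius norm of $e^{\frac{1}{2}\Lbf(t+s-2r)}$.
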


\begin{proof}
Following the notation of Section
\ref{subsec:distinguished-particles-defn} and suppressing dependence
on $N$, the path of the $n$-th particle in the system is given by
$$
x_n(t) = \frac{1}{\sqrt{N}} B_0(t) + \sum_{k=0}^{N-1} q_{n+1,k+1}
z_{k}
$$
where the family of OU-processes $\{z_{k,N}\}_{k=1}^{N-1}$ are
defined by equation \eqref{eq:z-defn}. The coefficients
$\{q_{nk}\}_{k=1}^{N}$ are the $n$-th row of entries of the matrix
$\Qbf$, which we recall is the orthogonal matrix whose columns are
the normalized eigenvectors of the Laplacian matrix $\Lbf_N$.

Because the particles are exchangeable,
\begin{equation} \label{eq:x-exchangeable}
\E{x_n(t)x_n(s)} = \frac{1}{N} \E{\xbf(t)\cdot\xbf(s)}
\end{equation}
where $\xbf(t) = (x_1(t), \ldots, x_N(t))'$ denotes the full vector
of all $N$ particles in the system. Its dynamics are defined by the
vector SDE \eqref{eq:xbf-sde} and the exact solution is given by
Duhamel's formula
$$
\xbf(t) = \sigma e^{\Lbf t} \xbf(0) + \sigma \int_0^t e^{\Lbf(t-s)}
d\Wbf(s)
$$
Again, we recall the assumption that $\xbf(0) = \zerobf$.

In order to calculate the autocorrelation $\E{\xbf(t)\cdot\xbf(s)}$,
we observe:
\begin{align*}
& \ebb\left(\int_0^t e^{\Lbf(t-t')} d \Wbf(t')\right)
\cdot \left(\int_0^s e^{\Lbf(s-s')} d \Wbf(s')\right) \\
& \qquad \qquad = \ebb{ \sum_{k=1}^N \left(\int_0^t e^{\Lbf(t-t')} d
\Wbf(t')\right)_k \left(\int_0^s e^{\Lbf(s-s')} d \Wbf(s')\right)_k} \\
& \qquad \qquad = \ebb \sum_{k=1}^N \left(\sum_{i=1}^N \int_0^t
\left(e^{\Lbf(t-t')}\right)_{ki} d W_i(t')\right) \left(
\sum_{j=0}^N \int_0^s \left(e^{\Lbf(s-s')}\right)_{kj}
d W_j(s')\right) \\
& \qquad \qquad = \sum_{k=1}^N \sum_{i=1}^N \sum_{j=1}^N \delta_{ij}
\int_0^{t \wedge s} \left(e^{\Lbf(t-r)}\right)_{ki}
\left(e^{\Lbf(s-r)}\right)_{kj} dr \\
& \qquad \qquad = \int_0^{t \wedge s}
\sum_{k=1}^N \sum_{j=1}^N \left(e^{\Lbf(t+s-2r)}\right)_{kj} dr \\
& \qquad \qquad = \int_0^{t \wedge s}
\|e^{\frac{1}{2}\Lbf(t+s-2r)}\|_F^2 dr.
\end{align*}
In the last line we have used the Frobenius norm: $ \| A \|_F :=
\sum_{k=1}^N \sum_{j=1}^N a_{kj}^2 = \sum_{k} \lambda_k^2 $ where
$\{\lambda_k\}$ is the set of eigenvalues of $A$.

To complete the calculation above, we note that $e^{\Lbf t}$ is
similar to $e^{\Lambdabf t}$, where $\Lambdabf$ is the diagonal
matrix from \eqref{eq:L-diag} whose entries are the eigenvalues of
$\Lbf$.  Therefore the eigenvalues of $e^{\Lbf t}$ are exactly the
entries $e^{\Lambdabf t}$, namely the set $\{e^{-\lambda_k
t}\}_{k=0}^{N-1}$. Imposing the assumption that $\xbf(0) = \zerobf$,
this implies
\begin{equation} \label{eq:xbf-acf}
\E{\xbf(t)\cdot\xbf(s)} = \sigma^2\int_0^{t \wedge s} \sum_{k=0}^N
e^{- \lambda_k(t+s-2r)} dr.
\end{equation}

Rearranging terms, we see that for each $n$, the distinguished
particle $x_n$ is a mean-zero Gaussian process with ACF of the form
found in Equation \ref{eq:acf-N} with the coefficients identically
set to $c_{k,N} = \sigma / \sqrt{N}$. In this way, we see that the
distinguished particle processes indexed by $N$ are equivalent in
law to a family effective \sou analogues $\{\tilde x_N(t)\}$ defined
by
$$
\tilde x_N(t) = \frac{\sigma}{\sqrt{N+1}} \left(B_0(t) +
\sum_{k=1}^N \tilde z_{k,N}(t)\right)
$$
where
$$
d \tilde z_{k,N}(t) = - \lambda_{k,N} \tilde z_{k,N}(t) dt + d
B_{k,N}(t).
$$
The coefficient measures $\mu_N$, defined by
\eqref{eq:coefficient-measure}, converge weakly to Lebesgue measure
and the asymptotic conclusions of Theorem \ref{thm:main} apply to
$\tilde x_N$ directly and therefore to the distinguished particle
process by corollary.
\end{proof}

\subsubsection{Examples}

We see from the argument above that, after collecting terms appropriately, the coefficient family $\{c_{k,N}\}$ of the effective distinguished particle process
$\tilde x_N(t)$ is given by the respective multiplicities of the
eigenvalue family $\{\lambda_{k,N}\}$. Whereas the coefficient
measures in the Rouse model and its generalizations (Section
\ref{sec:rouse}) will all converge to Lebesgue measure, we take a
moment discuss two examples where this is not the case.

Let $\kcal_N$ denote a complete graph on $N$ vertices.  Then there
are only two eigenvalues: 0, which has multiplicity 1, and $N /
(N-1)$ which has multiplicity $N-1$ \cite{book-chung}. The distinguished
particle process associated to each $\kcal_N$ is equivalent to an
effective \sou process
$$
\tilde x_N(t) = \frac{1}{\sqrt{N}} B_0(t) + \frac{\sqrt{N}}{N-1}
\tilde z_N(t)
$$
where
$$
d \tilde z_N(t) = - \frac{N}{N-1} \tilde z_N(t) + d B_N(t).
$$
The coefficient measures converge to the Dirac-$\delta$ distribution
centered at $x = 1$.  Recalling $x(0) = 0$, we see that the limiting MSD is
given by $\E{x^2(t)} = 1 - e^{-t}$, i.e. a one-dimensional OU
process.

The same asymptotic behavior is observed from a system with a
non-trivial coefficient family structure. An $N$-hypercube on $2^N$
vertices has eigenvalues of the form $\frac{2k}{N}$, with respective
multiplicities $\binom{N}{k}$ \cite{book-chung}. By the de Moivre-Laplace Theorem we
have the following large-$N$ characterization of the coefficients,
$$
c_{k,N} = \binom{N}{k} \frac{1}{2^N} \approx \sqrt{\frac{2}{\pi N}}
e^{-2 N \left(\frac{k}{N} - \frac{1}{2}\right)^2}.
$$
Rewriting the associated coefficients reveals a sequence of
approximate Dirac-$\delta$ functions centered at $x = \frac{1}{2}$,
$$
\mu_N(dx) \approx \sqrt{\frac{2}{\pi}} \sum_{k=1}^{N} \delta^N(x)
\frac{1}{N}
$$
where $\delta^N\!(x) = \sqrt{N} e^{-2 N \left(\frac{k}{N} -
\frac{1}{2}\right)^2}$.

Since the eigenvalue shape function is $\f(x) = 2x$, the limiting
MSD satisfies
$$
\E{x^2(t)} = \int_0^t \int_0^1 e^{-2 x s} \delta \big(x -
\frac{1}{2}\big) dx ds = 1 - e^{-t}.
$$

\subsubsection{Higher-dimensional diffusions} \label{subsec:high-dim}

The assumption that the particles are diffusing in one-dimensional
space is not essential.  Much like for standard Brownian motion, the
dimension of the diffusion only has an effect on the ACF by a
multiplicative constant.  The exponent of the diffusion is unchanged
and this principle holds for distinguished particle processes.

Consider the family $\xbf(t) = \{\xbf_1(t), \xbf_2(t), \ldots,
\xbf_N(t)\}$, where for $i = {1,\ldots, N}$, the particles have the
form $\xbf_i = \left(x_i^1, x_i^2, \ldots, x_i^d\right)'$ where $d$
is the dimension of the space in which the particles are moving. The
system-wide configuration potential is given by
$$
\Psi(\xbf) = \frac{1}{2} \sum_{n \ne m} \kappa_{nm} |\xbf_n -
\xbf_m|^2.
$$
In this case the interactions decouple in the various components of
the diffusion. The SDE for the $\alpha$-component of the $n$-th bead
is given by
$$
d x_n^\alpha = \sum_{m \ne n} \sum_{\beta = 1}^d \delta_{\alpha
\beta} \kappa_{nm} (x_m^\beta(t) - x_n^\alpha(t))dt + \sigma dW
_n^\alpha(t)
$$
where $\delta_{\alpha \beta}$ is the Kronecker $\delta$-function. We
see that each component conducts its own diffusion independent of
the other components and conclude that, assuming vanishing initial
conditions as usual,
$$
\E{\xbf_n(t)\cdot\xbf_n(s)} =  \frac{\sigma^2 d}{N} \int_0^{t\wedge
s} \sum_{k=1}^N e^{-2 \lambda_k (t+s-2r)} dr
$$
which is simply $d$ times the autocorrelation for one-dimensional
distinguished particles, as in \eqref{eq:xbf-acf}. In contrast to
this observation, the dimension of the underlying connection graph
does have significant effect, which we investigate in Section
\ref{subsec:rouse-higher-d}.

\subsection{The Rouse polymer model}
\label{sec:rouse}

We return to the touchstone example from Section
\ref{subsec:rouse-intro}, the Rouse chain model. We recall that
the connection graph $\gcal_R$ consists of edges $x_n
\leftrightarrow x_{n+1}$ for all $n = \{0, \ldots N-1\}$ as well as
the edge $x_0 \leftrightarrow x_n$.  This yields the system of SDEs
$$
d x_n(t) = \kappa [x_{n-1}(t) - x_n(t)] + \kappa [x_{n+1}(t) -
x_n(t)] + \sigma dW_n(t)
$$
and the diffusive spectrum is given by $\lambda_{k,N} := 4
\sin^2\!\left(\frac{k \pi}{N}\right)$. In terms of Assumption
\ref{as:shape}, the spectral shape function is
$$
\f(x) = 4\sin^2(\pi x)
$$
with $x \in [0,1)$.  Using the Taylor expansion for $\sin(x)$, we
see that this shape function essentially satisfies Assumption
\ref{as:rho} with shape parameter $\rho = 2$. (The only sense in
which it does not satisfy the parameter assumption is that it is not
strictly increasing on the full interval $[0,1)$.  However, this is
easily overcome by restricting to the interval $[0,1/2]$, and
multiplying the MSD by two.)

By Theorem \ref{thm:distinguished}, the family of distinguished
processes $x_{0,N}(t)$ are tight and the limiting process $x(t)$ has
the MSD
\begin{equation} \label{eq:Rouse-msd}
\E{x^2(t)} = 2 \sigma^2 \int_0^t \int_0^{\frac{1}{2}} e^{-4 \kappa
\sin^2(\pi x) s} dx ds
\end{equation}
It follows from the conclusions of Theorem \ref{thm:main} that the
anomalous exponent is $\nu = \frac{1}{2}$.  For an explicit
development of the above Laplace integral, including the order of
the correction terms, Eq.~\ref{eq:Rouse-msd} is happens to be a
worked example in \cite{book-bender-orszag}, Chapter 6.

\subsection{Higher-dimensional Rouse analogues}
\label{subsec:rouse-higher-d}

While we showed in Section \ref{subsec:high-dim} that large-$t$
anomalous exponents do not depend on the dimension of the space in
which the particles reside, we now observe that behavior does change
if the bead-spring network has a higher dimensional connection
graph. We employ a standard technique from graph theory of
constructing complex graphs from simple ones
\cite{2008-mahadevan}.

The Cartesian product of two graphs $\gcal_1$ and $\gcal_2$, which
have vertices $\{v_i\}_{i=1}^N$ and $\{w_i\}_{i=1}^M$, respectively,
consists of vertices enumerated by the set of pairs $\{(v_i,w_j)\}$.
There is an edge $(v_i,w_j) \leftrightarrow (v_k,w_\ell)$ if and
only if either $v_i = v_k$ and $\gcal_2$ contains the edge $w_j
\leftrightarrow w_\ell$, or $w_j = w_\ell$ and $\gcal_1$ contains
the edge $v_i \leftrightarrow v_k$.  When applied to a cycle graph
such as the Rouse graph $\gcal_R$ to itself, the Cartesian product
yields the skeleton of a torus.

The adjacency matrix of a Cartesian product of two graphs is given
by the Kronecker sum of their respective adjacency matrices $A_1
\oplus A_2$.  If $\gcal_1$ is a graph with $N$ vertices and
$\gcal_2$ has $M$ vertices, this sum is defined by $A_1 \oplus A_2
:= A_1 \otimes I_M + I_N \otimes A_2$, where the Kronecker product
$A \otimes B$ is a block matrix whose blocks are of the form $a_{ij}
B$. The resulting Kronecker sum matrix has dimension $NM \times NM$.
The only fact we will use here is that the set of eigenvalues of the
Laplacian matrix associated to the Kronecker sum is given by the set
\cite{2008-mahadevan}
$$
\alpha_{ij} = \{\lambda_i + \mu_j \colon i \in  1 \ldots, N, j \in
1, \ldots M \}.
$$
where $\{\lambda_i\}$ and $\{\mu_j\}$ are the eigenvalues of the Laplace matrices for $\gcal_1$ and $\gcal_2$, respectively.

Denoting the $N$-bead Rouse eigenvalues by $\{\lambda_{j,N}\}$, the MSD of a
distinguished particle in an $N \times N$ system is computed to be
$$
\E{x_N(t)^2} = \frac{1}{N^2} \int_0^t \sum_{i,j} e^{-\kappa
(\lambda_{i,N} + \lambda_{j,N}) s} ds
$$
which converges as $N \to \infty$ to the integral
\begin{equation} \label{eq:rouse-2d-msd}
\E{x(t)^2} = \int_0^t \int_0^1 \int_0^1 e^{-4\kappa (\sin^2(\pi x) +
\sin^2(\pi y)) s} dx dy ds.
\end{equation}
The family of distinguished particles converges in distribution in
the sense of Theorem \ref{thm:distinguished} and it remains
only to perform the Laplace integral asymptotic analysis on
\eqref{eq:rouse-2d-msd}.

As before, we take the leading order behavior near the
$(x,y)$-origin. For large $s$, we have
\begin{align*}
\Phi(s) &:= 4 \int_0^\frac{1}{2} \int_0^\frac{1}{2}
e^{-4 \kappa (\sin^2(\pi x) + \sin^2(\pi y)) s} dx dy \\
&\sim 4 \int_0^\infty \int_0^\infty e^{-4 \kappa \pi^2 (x^2 + y^2)s}
dx dy
\end{align*}
which after a conversion to polar coordinates yields that for $s$
large, $\Phi(s) \sim s^{-1}$. Integrating this asymptotic relation
implies that for large $t$,
$$
\E{x(t)^2} \sim \ln(t).
$$

The Kronecker sum can be iterated arbitrarily many times for higher
dimensional connectivity. Denoting the number of Kronecker sums by
$D$, we see that the associated $\Phi(s)$ satisfies
\begin{align*}
\Phi(s) &\sim 2^D \int_{\rbb_+^D} \exp\left(-4 \kappa \pi^2
s \sum_{i=1}^D x_i^2\right) d \xbf \\
&= C \int_0^\infty e^{-4 \kappa \pi^2 s r^2} r^{D-1} d r \\
&= C s^{-\frac{D}{2}}
\end{align*}
where we have allowed the constant $C = C(D)$ to change from line to
line. After integrating to get the MSD, we see that for $D \geq 3$,
the Rouse model with $D$-dimensional connectivity has MSD that is
bounded for all time. 

\subsection{Universality of the Rouse exponent}
\label{subsec:rouse-universality}

Returning our focus to the 1-D chain, we address an observation
\cite{book-doi-edwards} that the $t^\frac{1}{2}$ long-term behavior
is universal for a class of models.  Such a class is never precisely
described in the physics literature, but we provide in this section
one characterization.

We construct a network by starting with a Rouse chain where nearest
neighbor edges $x_n \leftrightarrow x_{n+1}$ are weighted by a
single spring constant $\kappa_1 \geq 0$. We generalize the model by
allowing edges of the form $x_n \leftrightarrow x_{n+j}$ which are
respectively given uniform weights $\kappa_j \geq 0$. The weights
are assigned uniformly in the spirit of preserving exchangeability
of the beads. After the appropriate edges are added at the
boundaries (e.g. the edge $x_0 \leftrightarrow x_{N-1}$ with weight
$\kappa_2$) the resulting Laplace matrix is a circulant matrix. For
example, under the assumption that $\kappa_{j} = 0$ for all $j \geq
3$, we have the Laplace matrix:
\begin{equation}
\Lbf = \left(\begin{array}{cccccccc} \kappa_0 & \kappa_{1} &
\kappa_{2} & 0 & \cdots & 0 & \kappa_{-2} & \kappa_{-1} \\
\kappa_{-1} & \kappa_0 & \kappa_{1} & \kappa_{2} & \cdots & 0 &
0 &  \kappa_{-2} \\
\kappa_{-2} & \kappa_{-1} & \kappa_0 & \kappa_1 & \cdots & 0 &
0 &  0 \\
0 & \kappa_{-2} & \kappa_{-1} & \kappa_0 & \cdots & 0 &
0 &  0 \\
0 & 0 & \kappa_{-2} & \kappa_{-1} & \cdots & 0 &
0 &  0 \\
&&\vdots&& \vdots&& & \\
\kappa_{-2} & 0 & 0 & 0 & \cdots & \kappa_1 &
\kappa_0 &  \kappa_{-1} \\
\kappa_{-1} & \kappa_{-2} & 0 & 0 & \cdots & \kappa_2 &
\kappa_1 &  \kappa_0 \\
\end{array} \right)
\end{equation}
where $\kappa_{-j} = \kappa_j$ and $\kappa_0 = -\sum_j
\kappa_j$. By convention the set of indices for the weights
$\{\kappa_j\}$ will be $j \in \{-\lfloor N/2 \rfloor, \ldots, \lceil
N/2 \rceil\}$.

\begin{theorem}[Universality of the Rouse exponent] \label{thm:rouse-universal}
Let $\{\gcal_N\}$ be a family of graphs whose associated Laplacian
matrices are circulant with weights $\{\kappa_j\}_{j \in
\zbb}$ which satisfy $\kappa_{-j} = \kappa_j$ for all $j \geq 1$
and $\kappa_0 = \sum_{j \in \zbb} \kappa_j$. We assume that there exists an integer $K > 0$, such that $\kappa_j = 0$ for all $j > K$. 

Then the family of distinguished particle processes $x_N$ converges in distribution to a
mean zero Gaussian process $x$ with ACF given by \eqref{eq:acf} where the shape function
$\f$ is defined by 
\begin{equation} \label{eq:shape-szego} 
	\f(x) := \sum_{j=-\infty}^\infty e^{2 \pi i x j} \kappa_j 
\end{equation} 
Furthermore MSD the limiting process $x$ satisfies the asymptotic relationships \begin{align*} 
\E{x^2(t)} &\sim t, \qquad t \mbox{ near zero;} \\ \E{x^2(t)} &\sim t^{\frac{1}{2}}, \qquad t \mbox{ large.} 
\end{align*} 
\end{theorem}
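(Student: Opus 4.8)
The plan is to recognize this as a direct application of Theorem~\ref{thm:distinguished}: once I verify that the eigenvalues of the circulant Laplacians satisfy Assumption~\ref{as:shape} with the stated shape function and that the beads are exchangeable, convergence in distribution to the Gaussian process with ACF~\eqref{eq:acf} is immediate, and the MSD asymptotics then follow from Theorem~\ref{thm:main} once I identify the spectral parameter $\rho$. So the whole argument reduces to three ingredients: computing the spectrum, checking the admissibility hypotheses on $\f$, and reading off $\rho$ from the behavior of $\f$ at the origin.

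First I would diagonalize the circulant Laplacian. Every circulant matrix is diagonalized by the discrete Fourier basis, so the eigenvalues of $\Lbf_N$ are \emph{exactly} the discrete Fourier transform of the generating row. Using $\kappa_{-j}=\kappa_j$ and the finiteness guaranteed by $\kappa_j=0$ for $j>K$, the diffusive spectrum is
$$
\lambda_{k,N} = 4\sum_{j\geq 1}\kappa_j \sin^2\!\left(\frac{\pi j k}{N}\right) = \f\!\left(\frac{k}{N}\right),
$$
which, up to the overall sign convention for the Laplacian, is the value at $x=k/N$ of the symbol~\eqref{eq:shape-szego}. Because this is an identity rather than an asymptotic relation, Assumption~\ref{as:shape} holds in the strongest possible sense: the supremum appearing there is identically zero for every $N$. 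It then remains only to confirm that $\f$ is an admissible shape function, i.e.\ continuous, integrable, nonnegative, with $\f(0)=0$ and strictly positive on $(0,1)$; continuity, integrability, and nonnegativity (using $\kappa_j\geq 0$) are immediate from the trigonometric-polynomial form, and $\f(0)=0$ is clear. Exchangeability of the beads is inherited from the cyclic symmetry of the circulant graph, so the hypotheses of Theorem~\ref{thm:distinguished} are met and the coefficient measures converge to Lebesgue measure (up to a constant, which affects only the prefactor) exactly as in its proof.

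Next I would extract the spectral parameter by expanding $\f$ near zero. Since $\sin^2(\pi j x) = (\pi j x)^2 + O(x^4)$, the expansion contains only even powers and
$$
\f(x) = 4\pi^2\Big(\sum_{j\geq 1} j^2 \kappa_j\Big)\, x^2 + O(x^4),
$$
whose leading coefficient is strictly positive whenever the graph is nontrivial (at least one $\kappa_j>0$). This is precisely Assumption~\ref{as:rho} with $\rho=2$, and it is the heart of the universality statement: no matter what the finite-range symmetric weights are, the symbol of a circulant Laplacian vanishes to \emph{exactly} second order at the origin, because each $\sin^2(\pi j x)$ does and their quadratic coefficients add rather than cancel. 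Feeding $\rho=2>1$ into~\eqref{eq:msd-large-t} of Theorem~\ref{thm:main} yields $\msd(t)\sim t^{1-1/\rho}=t^{\frac12}$ for large $t$, together with $\msd(t)\sim t$ near zero, which is the claimed behavior.

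The main obstacle is the strict-positivity requirement in Assumption~\ref{as:shape} on the open interval $(0,1)$. The symbol $\f(x)=4\sum_{j\geq 1}\kappa_j\sin^2(\pi j x)$ vanishes at an interior rational $x=p/q$ precisely when $q$ divides every $j$ with $\kappa_j>0$, i.e.\ when $d:=\gcd\{j:\kappa_j>0\}>1$, which is exactly the case in which the circulant graph disconnects into $d$ residue classes. I would therefore make explicit the hypothesis that the graph is connected, equivalently $d=1$, which removes all interior zeros and leaves $x=0$ and its reflection $x=1$ as the only minima of $\f$. A secondary, purely technical point, already present for the pure Rouse chain, is that $\f$ need not be monotone on $[0,1]$; this is handled as in Section~\ref{sec:rouse}, using the symmetry $\f(x)=\f(1-x)$ to localize Laplace's method near the single endpoint minimum and accounting for the reflected contribution with a factor of two. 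Neither issue affects the exponent, which is the content of the universality claim.
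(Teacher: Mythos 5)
Your proposal is correct and follows essentially the same route as the paper: diagonalize the circulant Laplacian in the discrete Fourier basis, verify Assumption \ref{as:shape} with $\f(x)=4\sum_{j=1}^K \kappa_j\sin^2(\pi j x)$, invoke Theorem \ref{thm:distinguished} for convergence in distribution, and Taylor-expand $\f$ at the origin to obtain $\rho=2$ and hence $\nu=\tfrac12$ from Theorem \ref{thm:main}. Your explicit connectivity hypothesis ($\gcd\{j:\kappa_j>0\}=1$) is a refinement the paper's proof omits---as stated, the theorem admits disconnected circulants for which $\f$ vanishes at interior rationals and the strict positivity in Assumption \ref{as:shape} fails---though, as you observe, this affects only constants and not the exponent.
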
 

\begin{proof} 
The eigenvalues and eigenvectors of circulant matrices can be computed directly \cite{book-gray}. For any given set of weights
$\{\kappa_j\}$, , the associated circulant matrix has a set of eigenvalue-eigenvector
pairs $\{(\lambda_k, \vbf_k)\}_{k=0}^N$ given by 
\begin{equation*}
	\lambda_k := \sum_{j = -\lfloor N/2 \rfloor}^{\lceil N/2 \rceil} e^{2 \pi i k j/(N+1)} \kappa_j
\end{equation*} and 
\begin{equation*}
	\vbf_k :=
	\frac{1}{\sqrt{N+1}} \left(1, e^{2 k \pi i/(N+1)}, e^{2 k \pi 2 i/(N+1)}, \ldots e^{2 k \pi i N/(N+1)} \right)'.
\end{equation*}
Uniform convergence of the eigenvalues to the shape function \ref{eq:shape-szego} in the sense of Assumption \ref{as:shape} is clear. Tightness of the
associated distinguished particles processes follow as in
Theorem \ref{thm:distinguished}.  The near zero asymptotic
behavior of the MSD is as in Theorem \ref{thm:main}. It remains only
to demonstrate the large-$t$ MSD behavior.

The restrictions that $\kappa_{-j} = \kappa_j$ and $\kappa_0$ is the
sum of all the other weights implies that
\begin{align*}
\f(x) &= \sum_{j=-K}^K e^{2 \pi i x j} \kappa_j = \sum_{j=1}^K 2 \kappa_j \left(1 - \cos(2 \pi x j)\right) = 4 \sum_{j=1}^K \kappa_j \sin^2(\pi x j)
\end{align*}
We immediately see that the Rouse spectrum of Section
\ref{sec:rouse} corresponds to the special case $K = 1$, $\kappa_1 =
1$. Since the sum is finite, applying the Taylor
expansion term-by-term to the series yields
$$
\f(x) \sim 4 \pi^2 x^2 \sum_{j=1}^K j^2 \kappa_j
$$
for $x < 1/K$.  As such, the shape function satisfies Assumption
\ref{as:rho} with shape parameter $\rho = 2$.  The large-$t$ anomalous
exponent $\nu = \frac{1}{2}$ follows immediately.
\end{proof}

Readers familiar with the theory of Toeplitz matrices will recognize
this type of weak convergence for eigenvalues from Szeg\"o's Theorem.
In this more general light, we see that this notion of eigenvalue convergence is more robust than the case stated here, however if one wishes to carry out the program for
non-exchangeable bead-spring systems, the convergence of the eigenvectors will have to be more carefully considered.

\subsection{The inclusion of repulsive forces}
\label{subsec:repulsive}

Seeing this universal nature of the Rouse scaling discourages the
notion that distinguished particle processes will be able to address
the wide range of behaviors seen experimentally.  It is interesting
to note, at least from a mathematical point of view, that if we are
allowed to include repulsive potentials between the beads, a larger
class of exponents becomes available.

For example, suppose that for a generalized Rouse network with
connectivity order 2, we have $\kappa_2 = - \kappa_1/4$.  Then the
leading order term of the expansion for the shape function $\f$
vanishes. One finds that in this case, the spectral parameter is
$\rho = 4$, and the resulting large-$t$ MSD exponent is $\nu = 1 -
\frac{1}{\rho} = \frac{3}{4}$.

In fact, one can create a process with large-$t$ MSD exponent $\nu =
1 - \frac{1}{2n}$ for any $n \in \nbb$, by Fourier inverting the
shape function $\f(x) = \sin^{2n}(x)$.  Such a shape function would
satisfy Assumption \ref{as:rho} with parameter $\rho = 2n$. To be
specific about the coefficients, we let
$$
\kappa_j := \int_0^1 e^{-2 \pi i j x} (e^{\pi i x} - e^{-\pi i
x})^{2n} dx.
$$
for $j \in \{-2n, \ldots 2n\}$. It follows that $\kappa_j =
(-1)^{j+1} \binom{2n}{j}$. The choice to take only even powers of
sine is made to ensure the symmetry $\kappa_j  = \kappa_{-j}$.

\subsection{Rescaled distinguished particle processes and linear
SPDE} \label{subsec:spde}

As a concluding note, we return to our discussion random string model which leads to  another set of \sou processes which have
qualitatively different behavior.

Consider again the Rouse chain model, but now we suppose that as the
number of beads increases, we simultaneously increase the spring
strength by a factor of $N^2$.  Without also increasing the
fluctuation strength by a factor of $\sqrt{N}$, the limiting
structure would collapse to a single point. That is, we define $x_n$
to satisfy the SDE
\begin{equation} \label{eq:defn-rouse-discrete}
d x_n(t) = \kappa N^2 (x_{n+1} - 2 x_n(t) + x_{n-1}(t)) + \sigma
\sqrt{N} dW_n(t)
\end{equation}

In \cite{1983-funaki} the author showed that there is a non-trivial
limiting object under the above rescaling.  We define a family of
functions by
$$
u_N(k/N, t) := x_{k,N}(t)
$$
and take the linear interpolation for the value of $u_N(y,t)$, for
all $y \in (\frac{k}{N},\frac{k+1}{N})$.  Then
$\{u_N(y,t)\}_{N=1}^\infty$ forms a tight family of functions and
the limiting object $u(y,t)$ satisfies the stochastic heat equation
$$
\partial_t u_N(y,t) = \Delta u(y,t) + W(dy,dt)
$$
where $W(dx,dt)$ is a space-time white noise \cite{book-walsh-spde}.  The
second-derivative in space can be anticipated by seeing the
double-difference spring operator in \eqref{eq:defn-rouse-discrete}
as a discrete approximation to the Laplacian $\Delta$.

The limiting distinguished particle process is $u(y,\cdot)$.  The
exact solution can be expressed in the Fourier inversion
$$
u(y,t) = \int_0^t \sum_{k=-\infty}^\infty e^{2 \pi i k y/N} e^{-4 \pi^2 k^2
(t-s)/N^2} \sigma d B_k(s)
$$
where we recognize this is as limit of \sou processes given by
\eqref{eq:sou-defn-x} and \eqref{eq:sou-defn-z} with coefficients
and spectrum
$$
\lambda_{k,N} = \left(\frac{2 \pi k}{N}\right)^2, \qquad c_{k,N} =
\cos\left(\frac{2 \pi k \ell}{N}\right)
$$
One can show that these monomer paths exhibit anomalous diffusion,
$\E{u(y,t)^2} \sim t^{1/2}$ for $t$ near zero rather than for large
$t$.  In fact, the process approaches a stationary distribution for
large times. As mentioned earlier in this section, this process $u(y,\cdot)$ has also received attention recently because its sample paths are locally rougher than Brownian motion, having finite quartic variation \cite{2007-swanson}.

Returning to the discussion in the previous subsection, setting
$\kappa_2 = -\kappa_1 / 4$ results in the system of SDEs
\begin{align*}
d x_n(t) = \kappa (x_{n+1} - 2 x_n(t) + x_{n-1}(t)) + \sigma dW_n(t)
\end{align*}
Presumably by rescaling the spring constants by $N^4$ while
strengthening the noise appropriately, one obtains the stochastic
beam equation in the limit.  One suspects that the local behavior
will be rougher still and have anomalous exponent $\nu = 3/4$.
Similar results should exist for any even number of spatial
derivatives, but we do not pursue this line of thought here.

\section{Acknowledgments}

The author would like to thank Greg Forest, Jonathan Mattingly, Peter March and Davar Khoshnevisan for their feedback and many thought-provoking conversations.  This work was supported in part by NSF DMS Grant No. 0449910.

\bibliographystyle{unsrt}
\bibliography{anomalous-diffusion}

\end{document}